\newtheorem{thm}{Theorem}[section]
 \newtheorem{cor}[thm]{Corollary}
 \newtheorem{lem}[thm]{Lemma}
 \theoremstyle{definition}
 \theoremstyle{remark}
 \newtheorem{rem}[thm]{Remark}
 \newtheorem{ex}[thm]{Example}
\def\b{{\beta}}
\def\g{{\gamma}}
\def\G{{\Gamma}}
\def\R{{\mathbb R}}
\def\deg{{\rm deg}}
\def\ID2{$(\text{ID}_2)$}
\def\sgn{{\rm sgn}}
\def\lc{{\rm l.c.}}
\begin{document}

\title[]{Idempotent factorization of matrices over a Pr\"ufer domain of rational functions}

\author{Laura Cossu}

\address{Laura Cossu\\ Institut für Mathematik und Wissenschaftliches Rechnen\\
 Karl-Franzens-Universität Graz\\
Heinrichstraße 36, 8010 Graz\\ Austria}

\email{laura.cossu@uni-graz.at, laura.cossu.87@gmail.com}

\thanks{The author is a member of the Gruppo Nazionale per le Strutture Algebriche, Geometriche e le loro Applicazioni (GNSAGA) of the Istituto Nazionale di Alta Matematica (INdAM)}

\subjclass[2020]{15A23, 13F05, 13A15}
 
\keywords{Factorization of matrices, idempotent matrices, minimal Dress rings, Pr\"ufer domains.}

\begin{abstract}
We consider the smallest subring $D$ of $\R(X)$ containing every element of the form $1/(1+x^2)$, with $x\in \R(X)$. $D$ is a Pr\"ufer domain called the {\it minimal Dress ring of $\R(X)$}. In this paper, addressing a general open problem for Pr\"ufer non B\'ezout domains, we investigate whether $2\times 2$ singular matrices over $D$ can be decomposed as products of idempotent matrices. We show some conditions that guarantee the idempotent factorization in $M_2(D)$.
\end{abstract}

\maketitle
\section{Introduction}
In 1965 Andreas Dress \cite{D} introduced a family of Pr\"ufer domains constructed as subrings $D_K$ of a field $K$ containing every element of the form $1/(1+x^2)$, for $x\in K$. Given a field $K$ not containing square roots of $-1$, the subring of $K$ generated by $\{(1+x^2)^{-1}\,:\, x\in K\}$ is said to be the {\it minimal Pr\"ufer-Dress ring} (or simply the {\it minimal Dress ring}) of $K$. We refer to \cite{D} and \cite{CZ_Dress} for more details on these domains. In the paper \cite{CZ_Dress}, the authors investigated minimal Dress rings of special classes of fields: Henselian fields, ordered fields and formally real fields (e.g., $\R(\mathcal{A})$, with $\mathcal{A}$ a set of indeterminates). They focused in particular on the minimal Dress ring $D$ of the field of real rational functions $\R(X)$, characterizing its elements \cite[Prop. 2.1]{CZ_Dress} and ideals \cite[Prop. 2.4]{CZ_Dress} and proving that $D$ is a Dedekind domain (i.e., a Noetherian Pr\"ufer domain) that is not a principal ideal domain \cite[Th. 2.3]{CZ_Dress}. They also identified a family of $2\times 2$ singular matrices over $D$ that can be written as a product of idempotent factors \cite[Th. 3.3]{CZ_Dress}. The study of the factorization of singular square matrices over rings as product of idempotent matrices has raised a remarkable interest both in the commutative and non-commutative setting since the middle of the 1960's (see \cite{SalZan,survey}). We say that an integral domain $R$ satisfies the property \ID2 if every $2\times 2$ singular matrix over $R$ is a product of idempotent factors.
A natural and well motivated conjecture, proposed by Salce and Zanardo in \cite{SalZan} and then investigated in \cite{CZ_Prufer} and \cite{CZ_integers}, asserts that every domain $R$ satisfying \ID2 must be a B\'ezout domain, namely, every finitely generated ideal of $R$ must be principal. Note that the reverse implication is false: not every B\'ezout domain verifies \ID2 (see \cite{Cohn, CZZ}). In \cite{CZ_Prufer} it is proved that if $R$ satisfies \ID2, then every finitely generated ideal of $R$ is invertible and so $R$ is a Pr\"ufer domain. Therefore, it is not restrictive to study \ID2 within this class of domains and, in view of the above conjecture, we expect that for every Pr\"ufer non-B\'ezout domain $R$ there exists at least one singular matrix in $M_2(R)$ that cannot be written as a product of idempotent factors.

In this paper we develop the investigation started in \cite{CZ_Dress} on idempotent factorizations of $2\times 2$ matrices over the minimal Dress ring $D$ of $\R(X)$. In Section 2 we fix the notation and recall some preliminary results and definitions. In Section 3 we focus on the factorizations in $M_2(D)$ and, in Theorems \ref{positive}, \ref{odd_1} and \ref{even_1}, we identify several conditions on a couple of elements $p,q\in D$ under which the matrix $\begin{pmatrix}
p & q\\
0 & 0
\end{pmatrix}$ factors into idempotents. In this way we supplement the results in \cite{CZ_Dress} by providing further families of $2 \times 2$ matrices over $D$ that admit idempotent factorizations. Moreover, in Example \ref{Example} we exhibit a singular matrix in $M_2(D)$ for which the failure of the above conditions prevents an ``easy'' decomposition into idempotent factors. However the general problem whether $D$ satisfies \ID2 remains open.

\section{Preliminaries and notation}

Let $R$ be an integral domain. We will use the standard notations $R^{\times}$ to denote its multiplicative group of units and $M_n(R)$ to denote the $R$-algebra of $n\times n$ matrices over $R$.

A square matrix $\mathbf{T}$ over $R$ is said to be idempotent if $\mathbf{T}^2=\mathbf{T}$. A direct computation shows that a singular nonzero matrix $\begin{pmatrix}
a & b\\
c & d
\end{pmatrix}$ over an arbitrary integral domain is idempotent if and only if $d = 1 - a$. For a singular matrix $\mathbf{S}\in M_n(R)$, the property of being a product of idempotent factors is preserved by similarity. This immediately leads to the following lemma.

\begin{lem}[Lemma 3.1 of \cite{CZ_Dress}]\label{scambio}
Let $R$ be an integral domain, $p, q \in R$. The matrix
$\begin{pmatrix}
p & q\\
0 & 0
\end{pmatrix}$
is a product of idempotent matrices if and only if such is
$\begin{pmatrix}
q & p\\
0 & 0
\end{pmatrix}.$
\end{lem}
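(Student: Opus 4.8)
The plan is to connect the two matrices by a short chain of operations, each of which preserves the property of being a product of idempotent factors. Besides similarity (recalled above), I will use two further elementary stability facts. First, transposition preserves the property: if $\mathbf{M}=E_1\cdots E_n$ with each $E_i$ a singular idempotent $\begin{pmatrix} a & b\\ c & 1-a\end{pmatrix}$, then $\mathbf{M}^{T}=E_n^{T}\cdots E_1^{T}$, and each $E_i^{T}=\begin{pmatrix} a & c\\ b & 1-a\end{pmatrix}$ is again idempotent by the criterion $d=1-a$ (its trace is $1$ and its determinant is $a(1-a)-bc=0$). Second, multiplying a product of idempotents on the left or on the right by one more idempotent again yields a product of idempotents, since one simply appends the extra factor.

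First I would transpose $\begin{pmatrix} p & q\\ 0 & 0\end{pmatrix}$ to reach $\begin{pmatrix} p & 0\\ q & 0\end{pmatrix}$. Then I would right-multiply by the idempotent $J=\begin{pmatrix} 0 & 1\\ 0 & 1\end{pmatrix}$; a direct check gives $\begin{pmatrix} p & 0\\ q & 0\end{pmatrix}J=\begin{pmatrix} 0 & p\\ 0 & q\end{pmatrix}$. Transposing once more yields $\begin{pmatrix} 0 & 0\\ p & q\end{pmatrix}$, and finally conjugating by the permutation matrix $P=\begin{pmatrix} 0 & 1\\ 1 & 0\end{pmatrix}$ gives $P\begin{pmatrix} 0 & 0\\ p & q\end{pmatrix}P=\begin{pmatrix} q & p\\ 0 & 0\end{pmatrix}$. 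Since each of the four steps sends a product of idempotents to a product of idempotents, this shows that if $\begin{pmatrix} p & q\\ 0 & 0\end{pmatrix}$ is a product of idempotents then so is $\begin{pmatrix} q & p\\ 0 & 0\end{pmatrix}$; the reverse implication then follows at once by exchanging the roles of $p$ and $q$.

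The step that carries the real content — and the one I expect to be the main obstacle to pin down — is the right-multiplication by $J$. A naive attempt to deduce the statement from similarity-invariance alone is doomed: the matrices $\begin{pmatrix} p & q\\ 0 & 0\end{pmatrix}$ and $\begin{pmatrix} q & p\\ 0 & 0\end{pmatrix}$ have traces $p$ and $q$, so for $p\neq q$ they are never similar, and no sequence of transpositions and conjugations (all of them trace-preserving) can pass from one to the other. The point is that $J$ is a \emph{singular} idempotent, so multiplication by it does not preserve the trace; it is exactly this non-invertible factor that relocates the entries and bridges the trace gap. Once the correct auxiliary idempotent has been identified, checking that it is idempotent and that the four products compute as claimed is routine, and the degenerate case $p=q=0$ (where both matrices are the zero matrix, itself a product of idempotents) needs no separate treatment.
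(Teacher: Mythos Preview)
Your argument is correct. Each of the four operations you use---transposition, right-multiplication by the idempotent $J=\begin{pmatrix}0&1\\0&1\end{pmatrix}$, a second transposition, and conjugation by $P=\begin{pmatrix}0&1\\1&0\end{pmatrix}$---does preserve the property of being a product of idempotents, and the chain of computations is accurate. Your remark that similarity alone cannot do the job (because the two matrices have traces $p$ and $q$) is also well taken.

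The paper itself does not prove this lemma; it merely quotes it from \cite{CZ_Dress} and prefaces it with the sentence that idempotent-factorability is preserved by similarity. So there is no detailed argument in the paper to compare against, and your write-up in fact supplies the missing ingredient (the non-invertible idempotent factor) that the paper's one-line hint glosses over. If anything, your route can be shortened: conjugating $\begin{pmatrix}p&q\\0&0\end{pmatrix}$ by $P$ gives $\begin{pmatrix}0&0\\q&p\end{pmatrix}$, and a single left-multiplication by the idempotent $\begin{pmatrix}1&1\\0&0\end{pmatrix}$ then yields $\begin{pmatrix}q&p\\0&0\end{pmatrix}$. This two-step variant avoids both transpositions while using exactly the same two stability facts (similarity and appending one idempotent factor), and is presumably closer to what the cited lemma in \cite{CZ_Dress} has in mind.
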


The next result will also be useful in the following.

\begin{lem}\label{construction} 
Let $p$ and $q$ be nonzero elements of an integral domain $R$, and $\textbf{M}=\begin{pmatrix}
p & q\\
0 & 0
\end{pmatrix}\in M_2(R)$. If $\textbf{M}=\textbf{S}\cdot\textbf{T}$, with $\textbf{S}=\begin{pmatrix}
p' & q'\\
z & t
\end{pmatrix}$ a singular matrix and $\textbf{T}=\begin{pmatrix}
a & b\\
c & 1-a
\end{pmatrix}$ an idempotent matrix over $R$, then $\textbf{S}$ has the form $\textbf{S}=\begin{pmatrix}
p' & q'\\
0 & 0
\end{pmatrix}$.
\end{lem}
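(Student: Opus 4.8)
The plan is to work entirely with the entry-by-entry identities forced by $\textbf{M}=\textbf{S}\cdot\textbf{T}$ and to show that the bottom row $(z,t)$ of $\textbf{S}$ must vanish. First I would expand the product and record the four scalar equations coming from $\textbf{S}\textbf{T}=\textbf{M}$, together with the singularity condition $\det\textbf{S}=0$. Writing $\textbf{T}=\begin{pmatrix} a & b\\ c & 1-a\end{pmatrix}$, the two top-row equations read $p'a+q'c=p$ and $p'b+q'(1-a)=q$, the two bottom-row equations read $za+tc=0$ and $zb+t(1-a)=0$, and singularity of $\textbf{S}$ gives $p't=q'z$.

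The key step is to eliminate the unknown entries of $\textbf{T}$ so as to isolate $t$. Multiplying the top-left equation $p'a+q'c=p$ by $t$ and then substituting $p't=q'z$ turns the left-hand side into $q'(za+tc)$; but $za+tc=0$ by the first bottom-row equation, so the left-hand side collapses to $0$ and we are left with $pt=0$. Since $R$ is a domain and $p\neq 0$, this yields $t=0$. I expect this manipulation, namely spotting that the singularity relation $p't=q'z$ is exactly what converts the top-row identity into the vanishing combination $za+tc$, to be the main (and essentially only) genuine obstacle; everything else is bookkeeping.

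Once $t=0$ is known, the conclusion $z=0$ follows by a short contradiction argument. With $t=0$ the singularity relation gives $q'z=0$, while the two bottom-row equations reduce to $za=0$ and $zb=0$. If $z$ were nonzero, the domain hypothesis would force $a=b=q'=0$, and then the top-right equation $p'b+q'(1-a)=q$ would read $q=0$, contradicting $q\neq 0$. Hence $z=0$, so the bottom row of $\textbf{S}$ is $(0,0)$ and $\textbf{S}$ has the asserted form.

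Finally, I would note that the computation reflects a conceptual fact: over the fraction field of $R$ the matrix $\textbf{M}$ is nonzero of rank one with column space $\langle(1,0)^{\mathsf T}\rangle$, and since $\mathrm{col}(\textbf{M})\subseteq\mathrm{col}(\textbf{S})$ with $\textbf{S}$ a nonzero singular (hence rank-one) matrix, its column space must coincide with $\langle(1,0)^{\mathsf T}\rangle$, forcing the second coordinates $z,t$ of both columns of $\textbf{S}$ to vanish. Notably, neither version uses the idempotency of $\textbf{T}$ beyond the facts that $\textbf{S}$ is singular and that the product is a nonzero matrix of the prescribed shape.
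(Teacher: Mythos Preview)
Your argument is correct. The entry-by-entry computation is accurate: multiplying $p'a+q'c=p$ by $t$ and using $p't=q'z$ to rewrite the left side as $q'(za+tc)=0$ gives $pt=0$, hence $t=0$; the subsequent contradiction from $z\neq 0$ is clean. Your conceptual rank-one remark over the fraction field is also valid and makes transparent why idempotency of $\textbf{T}$ plays no role.

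As for comparison, the paper does not actually prove this lemma: it simply states that the argument is essentially contained in the proof of Lemma~3.1 of \cite{CZZ}. So there is no detailed proof in the present paper to compare against; your self-contained verification is exactly the kind of computation the reader is being asked to supply.
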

We omit the proof, since it is essentially contained in that of Lemma 3.1 in \cite{CZZ}.

Finally, we recall below two immediate factorizations in $M_2(R)$:
\begin{equation}\label{r0}
\begin{pmatrix}
p & 0\\
0 & 0
\end{pmatrix}=\begin{pmatrix}
1 & -1\\
0 & 0
\end{pmatrix}\begin{pmatrix}
1 & 0\\
1-p & 0
\end{pmatrix} \ ; \ \begin{pmatrix}
0 & q\\
0 & 0
\end{pmatrix}=\begin{pmatrix}
1 & 0\\
0 & 0
\end{pmatrix}\begin{pmatrix}
0 & q\\
0 & 1
\end{pmatrix}.
\end{equation}

\medskip

From now on $D$ will denote the minimal Dress rings of the field of rational functions $\R(X)$. 

Following the notation in \cite{CZ_Dress}, let $\G$ be the set of the polynomials in $\R[X]$ that have no roots in $\R$. Then $\G = \{\alpha\prod_i \g_i \}$, where the $\g_i$ are monic degree-two polynomials irreducible over $\R[X]$ and $0 \ne \alpha$ is a real number. Set $\G^+ = \{f \in \R[X] : f(r) > 0 , \forall \, r \in \R \}$ and, correspondingly, $\G^- =  \{- f  : f \in \G^+ \}$. By Proposition 2.1 in \cite{CZ_Dress}, 
\[D = \{f/\g : f \in \R[X], \g\in \G , \deg \, f \le \deg \, \g \},\]
and
\[D^{\times}=\{ \g_1/\g_2: \g_1,\g_2\in \G, \deg \, \g_1 = \deg \, \g_2 \}.\]
As recalled in the introduction, we know from Theorem 2.3 and Proposition 2.4 of \cite{CZ_Dress} that $D$ is a Dedekind domain which is not a principal ideal domain. As an example, the ideal generated by $1/\g$ and $X/\g$, with $\g\in\G \setminus \mathbb R$, is not principal. 

It is worth remarking that a non-constant polynomial of $\mathbb R[X]$ {\it never} lies in $D$. 

Given a polynomial $f \in \R[X]$ we will denote as $\lc(f)$ its leading coefficient. In the following, given an element $p=f/\g\in D$ we will always assume that $\gamma$ is a product of monic irreducible polynomials of degree $2$. We will then refer to $\lc(f)$ as the leading coefficient of $p$.

\section{Idempotent factorizations in $M_2(D)$}

In this section we investigate property \ID2 over $D$. We find sufficient conditions on the entries of a  singular matrix over $D$ to get a factorization into idempotents. 

We start recalling two results of \cite{CZ_Dress} that we will need in our discussion.

\begin{lem}[Lemma 3.2 of \cite{CZ_Dress}]\label{grado uguale}
Let $x,y$ be non-zero polynomials in $\R[X]$ with $\deg\,x=\deg\,y$. 

\begin{enumerate}[(a)]

\item If $y(u)>0$ (or $y(u)<0$) for every $u$ root of $x$, then there exists $\beta \in \G$ such that $\delta=x^2+ y \beta\in \G^+$, $\deg\, x - 1 \le \deg \b \le \deg \, x = \deg\,\delta/2$.

\item If $x(z)>0$ (or $x(z)<0$) for every $z$ root of $y$, then there exists $\eta \in \G$ such  that $\delta= x\eta+y^2 \in \G^+$ and $\deg \, y - 1 \le \deg\,\eta \le \deg\,y=\deg \delta/2$.
\end{enumerate}
\end{lem}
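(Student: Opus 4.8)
The plan is to deduce part (b) from part (a) by the symmetry $x\leftrightarrow y$: applying (a) to the ordered pair $(y,x)$ produces $\eta\in\G$ with $y^2+x\eta\in\G^+$ and $\deg y-1\le\deg\eta\le\deg y=\deg\delta/2$, which is exactly the conclusion of (b). Within (a), I would reduce the case ``$y(u)<0$ for every root $u$ of $x$'' to the case ``$y(u)>0$'' by replacing $y$ with $-y$ and $\beta$ with $-\beta$, using $\G^-=\{-f:f\in\G^+\}$. So it suffices to treat (a) assuming $y(u)>0$ at every real root $u$ of $x$; write $n=\deg x=\deg y$.

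First I would record two elementary facts. Since $y(u)>0$ whenever $x(u)=0$, the polynomials $x$ and $y$ have no common real root, so $x$ does not vanish on the real zero set of $y$. Next I would localize where positivity of $\delta=x^2+y\beta$ can fail once $\beta\in\G^+$ (hence $\beta>0$ on all of $\R$): on $\{y\ge 0\}$ one has $\delta=x^2+y\beta\ge 0$ with equality impossible, because at a real root of $x$ one has $y\beta>0$ (as $y>0$ there) and at a real root of $y$ one has $x^2>0$. Thus $\delta>0$ automatically on $\{y\ge 0\}$, and the only constraint is on the open set $\{y<0\}$, where $\delta>0$ is equivalent to the pointwise inequality $\beta<x^2/|y|$.

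The construction I would use is $\beta=\epsilon\,\beta_0$ with $\beta_0=(1+X^2)^{\lfloor n/2\rfloor}\in\G^+$ and $\epsilon>0$ a small parameter; note $\deg\beta=2\lfloor n/2\rfloor$, which lies in $\{n-1,n\}$ and is $\le n$. It then remains to choose $\epsilon$ so that $\epsilon\beta_0<x^2/|y|$ throughout $\{y<0\}$, and I would check this on each of the finitely many connected components by splitting into three regimes: near a finite endpoint, which is a real root of $y$, the right-hand side blows up while $\beta_0$ stays bounded; on any compact middle portion $x^2/|y|$ has a positive minimum while $\epsilon\beta_0\to 0$ uniformly as $\epsilon\to 0$; and on an unbounded tail one compares leading terms, where $x^2/|y|\sim\big((\lc(x))^2/|\lc(y)|\big)X^{n}$ dominates $\epsilon\beta_0\sim\epsilon X^{2\lfloor n/2\rfloor}$ once $\epsilon$ is small (when $n$ is even this is exactly $\epsilon\,\lc(\beta_0)<(\lc(x))^2/|\lc(y)|$, and when $n$ is odd it is automatic since $2\lfloor n/2\rfloor=n-1<n$). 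Taking $\epsilon$ below the finitely many thresholds so produced yields $\delta\in\G^+$.

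Finally I would read off the degree of $\delta$. Since $\deg(y\beta)=n+2\lfloor n/2\rfloor\le 2n=\deg x^2$, the leading term of $\delta$ comes from $x^2$ whenever $n$ is odd; and when $n$ is even the smallness of $\epsilon$ gives $\lc(\delta)=(\lc(x))^2+\lc(y)\,\lc(\beta)>0$ (the same inequality already imposed by the tail estimate). In both cases $\deg\delta=2n$, so $\deg\delta/2=\deg x=n$, matching the claimed bounds $n-1\le\deg\beta\le n=\deg\delta/2$. The hard part will be the third regime: one must verify that a single $\epsilon$ can be chosen small enough to work simultaneously on the possibly unbounded components of $\{y<0\}$, and it is precisely the requirement that $\beta_0$ not outgrow $x^2/|y|$ at infinity that forces $\deg\beta\le n$ and, in the even case, constrains the leading coefficient of $\beta$.
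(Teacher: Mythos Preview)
The paper does not prove this lemma: it is quoted verbatim from \cite{CZ_Dress} (as Lemma~3.2 there) and stated without argument, so there is no proof in the present paper to compare your proposal against.

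That said, your proposal is a correct and self-contained proof. The reductions (b)$\Rightarrow$(a) via $x\leftrightarrow y$ and ``$y<0$ on roots of $x$''$\Rightarrow$``$y>0$'' via $(y,\beta)\mapsto(-y,-\beta)$ are valid since $\G$ is closed under sign change. The core construction $\beta=\epsilon(1+X^2)^{\lfloor n/2\rfloor}$ works because the hypothesis $y(u)>0$ at every real root $u$ of $x$ gives exactly what is needed on $\{y<0\}$: not merely that $x$ avoids the zero set of $y$ (which you state), but that $x$ has \emph{no} real root in $\{y<0\}$, so $x^2/|y|$ is strictly positive there. With that, the function $x^2/(|y|\beta_0)$ has a positive infimum on each of the finitely many components of $\{y<0\}$ --- near a finite endpoint it blows up, on any compact piece it attains a positive minimum, and at infinity it tends to $+\infty$ (for $n$ odd) or to $(\lc x)^2/|\lc y|>0$ (for $n$ even) --- and taking $\epsilon$ below the minimum of these finitely many infima yields $\delta\in\G^+$. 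Your degree check for $\delta$ is also fine: when $n$ is even the constraint $\epsilon<(\lc x)^2/|\lc y|$ that arises from the tail is precisely the condition $\lc(\delta)>0$, so no cancellation of the top term can occur. The one expository wrinkle is that the sentence ``$x$ does not vanish on the real zero set of $y$'' is not the fact you actually use later; you might replace it by the stronger (and equally immediate) observation that $x$ has no root on all of $\{y\le 0\}$.
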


\begin{thm}[Th. 3.3 of \cite{CZ_Dress}]\label{teorema 3.3}
Let $p$, $q$ be elements of $D$. Then the matrix $\begin{pmatrix}
p & q\\
0 & 0
\end{pmatrix}$ is a product of idempotent matrices if one of the following holds:
\begin{enumerate}[(i)]
\item $\deg\,p \ge \deg\,q$ and $q(u)>0$ (or $q(u)< 0$) for every $u$ root of $p$

\item $\deg\,q \ge \deg\,p$ and $p(z)>0$ (or $p(z)< 0$) for every $z$ root of $q$.
\end{enumerate}
\end{thm}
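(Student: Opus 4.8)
The plan is to reduce the two cases to one and then run an induction powered by Lemma \ref{grado uguale}. First I would invoke Lemma \ref{scambio}: since $\begin{pmatrix} p & q \\ 0 & 0\end{pmatrix}$ and $\begin{pmatrix} q & p \\ 0 & 0\end{pmatrix}$ are simultaneously products of idempotents, hypothesis (ii) for the pair $(p,q)$ is exactly hypothesis (i) for the swapped pair $(q,p)$, so it suffices to prove case (i) for all pairs. I would then clear denominators, writing $p=f/\g$ and $q=g/\g$ with a common $\g\in\G^+$ and $f,g\in\R[X]$; the condition $\deg p\ge\deg q$ becomes $\deg f\ge\deg g$, and, since $\g(u)>0$ for every real $u$, the sign condition on $q$ at the roots of $p$ becomes a sign condition on $g$ at the real roots of $f$. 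Replacing $q$ by $-q$ if needed (harmless, as $-1\in D^{\times}$ and conjugation by $\mathrm{diag}(1,-1)$ preserves idempotent factorizations) I may assume $g>0$ there; note that this strict condition forces $f$ and $g$ to have no common real root. The degenerate cases $p=0$ or $q=0$ are settled at once by the factorizations \eqref{r0}.

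The engine of the argument is the equal-degree situation $\deg f=\deg g$. Here Lemma \ref{grado uguale}(a) supplies $\b\in\G$ with $\d:=f^2+g\b\in\G^+$ together with the sharp bounds $\deg f-1\le\deg\b\le\deg f=\deg\d/2$. The purpose of these bounds is that they let me assemble from $f,g,\b,\d$ a genuine idempotent over $D$: each quotient $f^2/\d,\ fg/\d,\ f\b/\d,\ g\b/\d$ has numerator degree at most $\deg\d$ and hence lies in $D$, and the matrix they form has trace $(f^2+g\b)/\d=1$ and determinant $0$, so it is a rank-one idempotent in $M_2(D)$. I would extract this idempotent (or a short product built from it and the blocks in \eqref{r0}) as one factor, and then invoke Lemma \ref{construction}, which guarantees that after peeling off an idempotent on the right the remaining factor again has the shape $\begin{pmatrix} p' & q' \\ 0 & 0\end{pmatrix}$.

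The induction I would run on a complexity measure of the pair — the degree of the numerators, equivalently the number of real roots of the first entry — verifying that the residual pair $(p',q')$ is strictly simpler while still meeting the hypotheses of case (i), so that the inductive step closes and the full matrix becomes a product of idempotents. The strict inequality $\deg f>\deg g$ I would bring back to the equal-degree regime by a preliminary reduction: since the sign condition is only read off at roots of $f$, adding to $g$ a suitable multiple tied to $f$ raises its degree without disturbing its values (hence its sign) at those roots, and this adjustment must be carried along inside the matrix factorization rather than applied as a free column operation.

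I expect the main obstacle to be precisely the bookkeeping that keeps every constructed entry inside $D$. Because a non-constant polynomial never lies in $D$, each quotient is constrained by numerator-degree $\le$ denominator-degree, and this couples $\deg\g$ to the degrees of $f,g,\b,\d$ in a genuinely restrictive way. Arranging that the extracted idempotent and the reduced pair $(p',q')$ simultaneously lie in $D$, strictly lower the complexity, and inherit the sign (no-common-root) hypothesis is the delicate core of the proof, and it is exactly why the degree estimates in Lemma \ref{grado uguale} are stated so tightly.
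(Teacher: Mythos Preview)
The paper does not actually prove this theorem: it is quoted verbatim from \cite{CZ_Dress} as one of two results ``recalled'' for later use, so there is no proof here to compare against. What the paper \emph{does} contain is the proof of Theorem~\ref{positive}, which is the closest analogue and shows how Lemma~\ref{grado uguale} is really meant to be cashed in: after reducing to $\deg x=\deg y=\deg\gamma$, one applies part~(b) to obtain $\delta=x\eta+y^{2}$ with $\eta\in\Gamma$, and then
\[
\begin{pmatrix} x/\gamma & y/\gamma\\ 0&0\end{pmatrix}
=\begin{pmatrix}\delta/\gamma\eta & 0\\ 0&0\end{pmatrix}
\begin{pmatrix} x\eta/\delta & y\eta/\delta\\ xy/\delta & y^{2}/\delta\end{pmatrix}
\]
is a one-shot factorization, because $\eta\in\Gamma$ forces $\delta/\gamma\eta\in D^{\times}$. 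No induction is needed.

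Your outline gets the preliminary reductions right (Lemma~\ref{scambio}, common denominator, sign normalisation, degree equalisation), but the core step has a genuine gap. The idempotent you assemble from part~(a), namely $T$ with entries $f^{2}/\delta,\ fg/\delta,\ f\beta/\delta,\ g\beta/\delta$, has row space spanned by $(f,g)$; since a rank-one idempotent acts as the identity on its own row space, one computes directly that
\[
\begin{pmatrix} f/\gamma & g/\gamma\\ 0&0\end{pmatrix}\, T
=\begin{pmatrix} f/\gamma & g/\gamma\\ 0&0\end{pmatrix},
\]
so ``peeling $T$ off on the right'' leaves the original matrix unchanged and gives no reduction for an induction. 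If instead you try to mimic the display above and pull off a scalar block on the left, the required scalar is $\delta/(\gamma f)$, which is \emph{not} in $D$ because $f$ has real roots. This is exactly the asymmetry between parts~(a) and~(b) of Lemma~\ref{grado uguale}: part~(b) produces the auxiliary factor $\eta$ inside $\Gamma$, which is what makes the residual scalar a unit of $D$; part~(a) produces the auxiliary factor $\beta\in\Gamma$ on the \emph{other} side of the identity $\delta=f^{2}+g\beta$, and the slot you need for the scalar is occupied by $f\notin\Gamma$. Your proposal does not yet explain how to get around this, so the ``delicate core'' you flag is in fact still missing, not merely bookkeeping.
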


Two polynomials $x,y\in\R[X]$ are said to be \textit{weakly comaximal} if $\gcd(x,y)\in\G$, i.e., if $x$ and $y$ have no common roots.
Let $p$ and $q$ be two elements of $D$. Then we can always write $p=x/\g$ and $q=y/\g$, with $\g\in \G^+$ and $x,y\in \R[X]$. We say that $p$ and $q$ are \textit{weakly comaximal} if so are $x$ and $y$.

\begin{thm}\label{positive}
Let $p$ and $q$ be weakly comaximal elements of $D$. If either $p\geq 0$  or $q\geq 0$, then the matrix $\begin{pmatrix}
p & q\\
0 & 0
\end{pmatrix}$ is a product of idempotent matrices.
\end{thm}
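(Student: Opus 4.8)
The plan is to reduce to the case $q \ge 0$ and then split according to the relative degrees of $p$ and $q$. First I would invoke Lemma~\ref{scambio}: since swapping the two entries preserves idempotent factorizability, I may assume without loss of generality that $q \ge 0$ (the case $p \ge 0$ being identical after the swap). Writing $p = x/\g$ and $q = y/\g$ with $\g \in \G^+$ and $x, y \in \R[X]$, the hypothesis $q \ge 0$ means $y(r) \ge 0$ for all $r \in \R$, while weak comaximality means $x$ and $y$ have no common real root. The key elementary observation is that these two facts together force $y(u) > 0$ at every real root $u$ of $x$: indeed $y(u) \ge 0$ by positivity and $y(u) \ne 0$ by weak comaximality. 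Thus the sign condition in the hypothesis of Theorem~\ref{teorema 3.3} is automatically met, with the strict positive sign.

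When $\deg p \ge \deg q$ (equivalently $\deg x \ge \deg y$), the previous observation lets me apply Theorem~\ref{teorema 3.3}(i) verbatim, and the matrix factors. So the whole content lies in the complementary range $\deg p < \deg q$, where neither branch of Theorem~\ref{teorema 3.3} applies directly: branch (ii) would require control on the sign of $p$ at the roots of $q$, and global positivity of $q$ gives no such control (the real roots of $y$ are double points where $x$ may take either sign). I would treat this range by induction on $\deg q$. The inductive step aims to peel off a single idempotent factor: by Lemma~\ref{construction}, any factorization $\mathbf{M} = \mathbf{S}\,\mathbf{T}$ with $\mathbf{T}$ idempotent automatically has $\mathbf{S} = \begin{pmatrix} p' & q' \\ 0 & 0\end{pmatrix}$, so it suffices to produce an idempotent $\mathbf{T} \in M_2(D)$ for which the resulting pair $(p', q')$ is again weakly comaximal with $q' \ge 0$ (or $p' \ge 0$) but with $\deg q'$ strictly smaller, until the configuration falls into the case $\deg p \ge \deg q$ or into one of the immediate factorizations~\eqref{r0}.

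The engine of the reduction is Lemma~\ref{grado uguale}(a). Once the two numerators are brought to equal degree, the sign condition already secured guarantees a $\b \in \G$ with $\d = x^2 + y\b \in \G^+$ and $\deg \b \le \deg x = \deg \d / 2$; the relation $x^2 + y\b = \d$ is a Bézout-type identity modulo the positive-definite denominator $\d$, and it is precisely what lets one write down an explicit idempotent factor of $\mathbf{M}$ with entries in $D$. Here the global positivity of $q$ — rather than mere positivity at the roots of $p$ — is what I expect to be essential: after each reduction the polynomial $x$ is replaced by a different one, so its roots move, and only a hypothesis controlling the sign of $y$ \emph{everywhere} survives to keep Lemma~\ref{grado uguale}(a) applicable at the next stage.

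The main obstacle I anticipate is the degree-mismatch range $\deg p < \deg q$, and within it two linked technical points: first, arranging the degrees to be equal so that Lemma~\ref{grado uguale}(a) can be invoked without destroying weak comaximality or the sign condition; and second, verifying that the idempotent factor built from $x^2 + y\b = \d$ actually lies in $M_2(D)$, i.e. that every entry is a ratio of a polynomial by a member of $\G$ of no smaller degree. The degree bounds $\deg x - 1 \le \deg \b \le \deg x$ and $\deg \d = 2\deg x$ in Lemma~\ref{grado uguale}(a) are exactly calibrated for this membership check, and confirming both the idempotency relation $a(1-a) = bc$ and the degree inequalities simultaneously is the delicate computation at the heart of the argument.
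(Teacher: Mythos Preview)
Your overall framework is right and mirrors the paper's (with the symmetric choice $q\ge 0$ in place of the paper's $p\ge 0$), but you overcomplicate the range $\deg p<\deg q$. The moment you carry out the step you yourself flag---``arranging the degrees to be equal''---by passing to the similar matrix $\begin{pmatrix} p+q & q\\ 0 & 0\end{pmatrix}$, you have $\deg(p+q)=\deg q$, weak comaximality is preserved (since $\gcd(p+q,q)=\gcd(p,q)$), and the global condition $q\ge 0$ still forces $q(u)>0$ at every root of $p+q$. You are then back in your own easy case and Theorem~\ref{teorema 3.3}(i) applies verbatim. No induction is needed, and Lemma~\ref{grado uguale}(a) never has to be invoked directly; the inductive scheme you sketch is not wrong, but it is superfluous and you never actually exhibit the idempotent that would lower $\deg q'$.

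For comparison, the paper handles its symmetric hard case ($p\ge 0$, $\deg p\ge\deg q$) not by citing Theorem~\ref{teorema 3.3} but by unpacking it: after the same similarity (replace $q$ by $p+q$) and a further reduction to $\deg p=\deg q=0$, it applies Lemma~\ref{grado uguale}(b) to get $\delta=x\eta+y^2\in\G^+$ with $\deg\eta=\deg x=\deg\gamma$, and writes down the single idempotent
\[
\mathbf{T}=\begin{pmatrix} x\eta/\delta & y\eta/\delta\\ xy/\delta & y^2/\delta\end{pmatrix},\qquad
\begin{pmatrix} x/\g & y/\g\\ 0 & 0\end{pmatrix}=\begin{pmatrix}\delta/\g\eta & 0\\ 0 & 0\end{pmatrix}\mathbf{T},
\]
with $\delta/\g\eta\in D^\times$. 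This is a one-shot factorization, not an iteration. Note that the paper uses part~(b), yielding the factor $\eta\in\G$ in the denominator; your intended use of part~(a) gives $\delta=x^2+y\beta$, and the analogous factorization would place $x$ (which can have real roots) where $\eta$ sits, so the left factor $\delta/\g x$ need not lie in $D$. The clean way out is exactly the observation above: after equalizing degrees you are already in the range covered by Theorem~\ref{teorema 3.3}.
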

\begin{proof}
By Lemma \ref{scambio}, $\begin{pmatrix} 
p & q\\
0 & 0
\end{pmatrix}$ is a product of idempotent matrices if and only if such is $\begin{pmatrix}
q & p\\
0 & 0
\end{pmatrix}$, therefore we can safely assume that $p\geq 0$.

Let us first consider the case $\deg \, p \geq \deg \, q$.
We can assume without loss of generality that $\deg \, p =\deg \, q$. In fact, since $\begin{pmatrix}
p & q\\
0 & 0
\end{pmatrix}$ is similar to $\begin{pmatrix}
p & p+q\\
0 & 0
\end{pmatrix}$ and $p$ and $p+q$ are still weakly comaximal, if $\deg \, p > \deg\, q$, it not restrictive to replace $q$ with $p + q$. Thus, let $\deg \, p = \deg \, q$ and set $p = x/\g$ and $q = y/\g$, with $x, y \in \R [X]$, $\g \in \G^+$. 

As a further reduction, we may assume that $\deg \,p = \deg \,q=0$. In fact, being $p\geq 0$, every root of $p$ has even multiplicity and $\deg\, x$ is even. Moreover, if $\deg\,x<\deg\,\g$, taking any $\tau \in \G^+$ such that $\deg \, x = \deg \,\tau$,
$$
\begin{pmatrix}
x/\g & y/\g\\
0 & 0
\end{pmatrix} =
\begin{pmatrix}
\tau/\g & 0\\
0 & 0
\end{pmatrix}
\begin{pmatrix}
x/\tau & y/\tau\\
0 & 0
\end{pmatrix}
$$
is a factorization in $M_2(D)$ and, by (\ref{r0}), the matrix on the left of the above equality is a product of idempotents if such is the second factor of the product on the right. 

Since for every $z$ root of $y$, $x(z)$ is always $>0$, we have got in the position to apply Lemma \ref{grado uguale} (ii) to $x$ and $y$. Therefore, there exists $\eta \in \G$ such that
$
\delta = x \eta + y^2 \in \G^+
$
where $\deg \, \eta = \deg \, x$ and $\deg \, \delta = 2 \deg \, \eta$. 

Then, since
$
\deg \, x=\deg \, y=\deg \,\g=\deg\,\eta  
$, $\delta/\g \eta \in D^\times$ and $x\eta/\delta$, $y\eta/\delta$, $x y/\delta$, $y^2/\delta \in D$.
Moreover, the relation $1 - x\eta/\delta = y^2/\delta$ implies that $\mathbf{T}=\begin{pmatrix}
x\eta/\delta & y\eta/\delta\\
x y/\delta & y^2/\delta
\end{pmatrix}$ is an idempotent matrix over $D$. Therefore
\[\begin{pmatrix}
x/\g & y/\g \\
0 & 0
\end{pmatrix}=\begin{pmatrix}
\delta/\g\eta & 0 \\
0 & 0
\end{pmatrix}\mathbf{T},
\] and using (\ref{r0}) we conclude that $\begin{pmatrix}
p & q\\
0 & 0
\end{pmatrix}$ is a product of idempotent matrices over $D$.

On the other hand, if $\deg \,q> \deg\, p$, it suffices to apply Theorem \ref{teorema 3.3} (ii).
\end{proof}

\begin{rem}
The matrix $\begin{pmatrix}
p & q\\
0 & 0
\end{pmatrix}\in M_2(D)$ is a product of idempotent matrices even if $p$ and $q$ are two comaximal elements of $D$ such that either $p\leq 0$ or $q\leq 0$. The proof is basically the same as that of Theorem \ref{positive}.
\end{rem}

In what follows the symbol $f^{(j)}$ denotes the $j$-th derivative of the polynomial  $f \in \mathbb R[X]$.

\begin{lem}\label{baselemma}
Let $x,y\in \R[X]$ and $\varepsilon\in \R^+$ be such that:
\begin{itemize}
\item $y$ has $0$ as unique root with multiplicity $k$;
\item $x(0)\neq 0$;
\item $y^{(i)}>0$ in $(0, \varepsilon]$ for $0\leq i\leq k-1$ ;
\item $y^{(k)}$ does not change sign in $(0, \varepsilon]$;
\item $x^{(j)}$ is either zero or does not change sign in $(0, \varepsilon]$ for $0\leq j\leq k$.
\end{itemize}
Then, there exists a real number $r_0>0$ such that, for every $r\in ( 0,  r_0 ]$, $rx+y$ has at most one root in $(0,\varepsilon]$, and exactly one root if $x<0$ in $( 0,\varepsilon]$.
\end{lem}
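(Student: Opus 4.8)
The plan is to reduce the root count to the behaviour of a single auxiliary function and to use the smallness of $r$ to force all roots near $0$. First I would pin down the sign of $x$ on $(0,\varepsilon]$: since $x=x^{(0)}$ does not change sign there and $x(0)\neq 0$, continuity at $0$ forces $x$ to keep the sign of $x(0)$ throughout $(0,\varepsilon]$. If $x>0$ on $(0,\varepsilon]$, then $rx+y>0$ there for every $r>0$ (both summands are positive, as $y=y^{(0)}>0$ on $(0,\varepsilon]$), so $rx+y$ has no root at all; this already yields the bound ``at most one root''. It then remains to treat the case $x<0$ on $(0,\varepsilon]$ and to prove that there is exactly one root for small $r$.

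Assume therefore $x<0$ on $(0,\varepsilon]$. The key observation is that, since $x$ never vanishes there,
\[
(rx+y)(t)=0 \quad\Longleftrightarrow\quad h(t):=-\frac{y(t)}{x(t)}=r ,
\]
so the roots of $rx+y$ in $(0,\varepsilon]$ are exactly the points where the \emph{fixed} function $h$ attains the value $r$. Here $h>0$ on $(0,\varepsilon]$ (both $y$ and $-x$ are positive) and $h(t)\to 0$ as $t\to 0^{+}$, because $y(t)\to 0$ while $x(t)\to x(0)\neq 0$. Counting roots of $rx+y$ thus becomes counting preimages of $r$ under $h$, and the problem reduces to understanding $h$ near $0$.

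The heart of the argument is to show that $h$ is strictly increasing on some interval $(0,\rho]$. I would differentiate, writing $h'=(yx'-y'x)/x^{2}$, so that the sign of $h'$ is governed by the Wronskian-type polynomial $N:=yx'-y'x$. Since $0$ is a root of $y$ of multiplicity exactly $k$ with $y>0$ on $(0,\varepsilon]$, we may write $y(t)=a_k t^{k}+\cdots$ with $a_k>0$ (equivalently $y^{(k)}(0)>0$, which the sign hypotheses on $y^{(k)}$ guarantee), whence $y'(t)=k a_k t^{k-1}+\cdots$; combining this with $x(t)=x(0)+\cdots$ and $x(0)<0$ gives
\[
N(t)=-k\,a_k\,x(0)\,t^{k-1}+O(t^{k}).
\]
The leading coefficient $-k\,a_k\,x(0)$ is strictly positive, so $N>0$ on a suitable $(0,\rho]$ and hence $h'>0$ there. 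I expect this local sign control of the Wronskian to be the main obstacle, since it is precisely where the multiplicity of the root of $y$ and the sign of $x(0)$ must be combined correctly; the remaining derivative hypotheses serve to keep all the relevant leading coefficients of the expected sign.

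Finally I would globalize by compactness. On $[\rho,\varepsilon]$ the continuous positive function $h$ attains a minimum $m>0$; set $r_0=m$. For every $r\in(0,r_0)$ the equation $h(t)=r$ has no solution in $[\rho,\varepsilon]$ (there $h\ge m>r$), while on $(0,\rho]$ the function $h$ increases strictly from its limit $0$ at $0^{+}$ up to $h(\rho)\ge m>r$, so it meets the value $r$ exactly once. Hence $rx+y$ has exactly one root in $(0,\varepsilon]$ when $x<0$ and, together with the case $x>0$ treated above, at most one root in general, which is the assertion of the lemma.
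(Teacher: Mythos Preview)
Your argument is correct, but it follows a genuinely different route from the paper's. The paper proves the lemma by a descending induction on the order of the derivative: it first fixes the sign of $r x^{(k)}+y^{(k)}$ on $[0,\varepsilon]$ for small $r$, then at each step $i=k-1,k-2,\dots,0$ runs a three-case analysis (depending on the sign of $x^{(i)}$ and whether $y^{(i)}(0)=0$) to conclude that $r x^{(i)}+y^{(i)}$ has at most one root in $(0,\varepsilon]$, shrinking $r_0$ whenever necessary. After $k$ steps it reaches $rx+y$ itself.

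Your approach bypasses this descent entirely: rewriting the equation as $h(t)=-y(t)/x(t)=r$ and reading off the sign of $h'$ from the lowest-order term of the Wronskian $yx'-y'x$ gives local strict monotonicity of $h$ in one stroke, and compactness on $[\rho,\varepsilon]$ handles the rest. This is shorter and, notably, uses fewer hypotheses than the paper: you only need $y>0$ on $(0,\varepsilon]$ (to get $a_k>0$) and $x(0)\neq 0$; the sign assumptions on $y^{(i)}$ for $1\le i\le k-1$ and on $x^{(j)}$ for $j\ge 1$ are never invoked, so your remark that ``the remaining derivative hypotheses serve to keep all the relevant leading coefficients of the expected sign'' is unnecessary. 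The paper's argument, on the other hand, is more elementary in that it avoids forming the quotient $y/x$ and stays entirely within sign-tracking of polynomials, at the cost of the repeated case analysis. One cosmetic point: since the statement asks for $r\in(0,r_0]$, take $r_0$ strictly smaller than your minimum $m$ (say $r_0=m/2$) rather than $r_0=m$.
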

\begin{proof}
Note that $y^{(k)}(0) \ne 0$, hence $y^{(k)}$ has nonzero max and min in $[0, \varepsilon]$. Since, by assumption, $x^{(k)}$ is either zero or does not change sign in $(0, \varepsilon]$, an easy direct check shows that, for all possible signs, there exists $r_0 > 0$ such that, for every $r\in ( 0, r_0 ]$, $r x^{(k)} + y^{(k)}$ is either strictly positive or negative in $[0, \varepsilon]$. It follows that $\forall r\in ( 0, r_0 ]$, $r x^{(k-1)} + y^{(k-1)}$ is either increasing or decreasing in the interval and hence it has at most one root.

Now, let us consider the $(k-1)$-th derivative of $rx+y$.

We distinguish three cases.
\begin{enumerate}[(i)]
\item If $x^{(k-1)}\geq 0$ in $(0,\varepsilon]$, then $rx^{(k-1)}+y^{(k-1)}> 0$ in this neighborhood for every $r\in (\ 0,\ r_0\ ]$. Therefore, being increasing, $rx^{(k-2)}+y^{(k-2)}$ has at most a unique root in the interval.
\item If $x^{(k-1)}<0$ in $( 0, \varepsilon]$ and $y^{(k-1)}(0)\neq 0$, then, by possibly choosing a smaller $r_0$, $rx^{(k-1)}+y^{(k-1)}$  is either strictly positive or negative in $[0, \varepsilon]$ for every $r\in (0, r_0]$, and again we get that $rx^{(k-2)}+y^{(k-2)}$ has at most a unique root.
\item If $x^{(k-1)} <0$ in $( 0, \varepsilon]$  and $y^{(k-1)}(0)= 0$, by possibly choosing a smaller $r_0$, $rx^{(k-1)}(\varepsilon)+y^{(k-1)}(\varepsilon)>0$ for every $r\in ( 0, r_0]$. Since  $rx^{(k-1)}(0)+y^{(k-1)}(0)\leq 0$, we have two possibilities: or $rx^{(k-1)}+y^{(k-1)}>0$ in $(0,\varepsilon]$ for every $r\in ( 0, r_0]$, or there exists $x_{k-1}^r\in ( 0, \varepsilon)$ such that $rx^{(k-1)}(x_{k-1}^r)+y^{(k-1)}(x_{k-1}^r)=0$ and this zero is unique since $r x^{(k-1)} + y^{(k-1)}$ has at most one root. As a consequence, in the first case $rx^{(k-2)}+y^{(k-2)}$ is strictly increasing and admits at most one root, in the second case it decreases on $(0, x_{k-1}^r)$ and increases on $(x_{k-1}^r, \varepsilon]$.
\end{enumerate}

Let us now distinguish three more cases for the $(k-2)$-th derivative of $rx+y$.

\begin{enumerate}[(i)]
\item If $x^{(k-2)}\geq 0$ in $(0,\varepsilon]$, then $rx^{(k-2)}+y^{(k-2)}> 0$ in this neighborhood for every $r\in ( 0, r_0 ]$, therefore $rx^{(k-3)}+y^{(k-3)}$ is increasing and it has at most a unique root in the interval.
\item If $x^{(k-2)}<0$ in $( 0, \varepsilon]$ and $y^{(k-2)}(0)\neq 0$, then, by possibly choosing a smaller $r_0$, $rx^{(k-2)}+y^{(k-2)}$  is either strictly positive or negative in $[0, \varepsilon]$ for every $r\in (0, r_0]$, and again we get that $rx^{(k-3)}+y^{(k-3)}$ has at most a unique root.
\item If $x^{(k-2)} <0$ in $( 0, \varepsilon]$  and $y^{(k-2)}(0)= 0$, by possibly choosing a smaller $r_0$ $rx^{(k-2)}(\varepsilon)+y^{(k-2)}(\varepsilon)>0$ for every $r\in(0,r_0]$. Since $rx^{(k-2)}(0)+y^{(k-2)}(0)\leq 0$ we have two possibilities: $rx^{(k-2)}+y^{(k-2)}>0$ in $(0,\varepsilon]$ for every $r\in(0,r_0]$ or, for every $r\in(0,r_0]$, there exists $x_{k-2}^r\in ( 0, \varepsilon)$ zero of $rx^{(k-2)}+y^{(k-2)}$. By the previous step $rx^{(k-2)}+y^{(k-2)}$ is either increasing or has a unique critical point on $(0,\varepsilon]$. In both this cases we cannot have other roots besides $x_{k-2}^r$. As a consequence $rx^{(k-3)}+y^{(k-3)}$ is either strictly increasing or it decreases on $(0, x_{k-2}^r)$ and increases on $(x_{k-2}^r, \varepsilon]$.
\end{enumerate}

Iterating the procedure, after $k$ steps we obtain that there exists a real number $r_0>0$ such that, for every $r\in ( 0,  r_0 ]$, $rx+y$ has at most a unique root in $(0,\varepsilon]$ and exactly one root if $x<0$ in $( 0,\varepsilon]$.
\end{proof}

\begin{rem}
In the hypothesis of the above Lemma, if $k=1$, the proof becomes much easier. If $x>0$ in $(0,\varepsilon]$, $rx+y>0$ for every positive $r\in \R$. Let us assume henceforth that $x < 0$ in $(0,\varepsilon]$. There always exists a suitable $r_0>0$ such that $rx(\varepsilon)+y(\varepsilon)>0$ for every $r\in(0,r_0]$. Since $y(0)=0$ and $y(\varepsilon)>0$, it must be $y'>0$ on $(0,\varepsilon]$. If in the same interval $x'\geq 0$ then $rx'+y'>0$ and since $rx(0)+y(0)<0$, $rx+y$ has a unique root in $(0,\varepsilon]$ for every $r\in(0,r_0]$. If $x'<0$ in $(0,\varepsilon]$, by possibly choosing a smaller $r_0$, $rx'+y'$ is still strictly positive in $(0,\varepsilon]$ for every $r\in(0,r_0]$ and we conclude as before.
\end{rem}

\begin{lem}\label{1root_dispari}
Let $x,y$ be polynomials in $\R[X]$ without common roots, such that $\deg\,x$ and  $\deg\,y$ are odd, $\deg\,x>\deg\,y$, $y$ has a unique root and there exist $x_1,x_2\in \R$ roots of $x$ such that $y(x_1)y(x_2)<0$. Then, there exists a suitable $r\in\R$, such that $rx+y$ has a unique root.
\end{lem}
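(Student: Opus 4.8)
The plan is to exhibit a single value of $r$, of small absolute value and of the same sign as $\lc(x)$, and to show that for this choice $rx+y$ has exactly one real root. The guiding observation is that, since $\deg\,x>\deg\,y$, we have $\deg(rx+y)=\deg\,x$ for every $r\neq 0$, so $rx+y$ is a polynomial of odd degree and hence always has at least one real root; the whole difficulty is to force it to have \emph{at most} one.

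First I would normalize. Translating the variable, I may assume that the unique root of $y$ is $0$; since $x$ and $y$ share no root, $x(0)\neq 0$. Because $\deg\,y$ is odd while the non-real roots of $y$ occur in conjugate pairs, the root $0$ has odd multiplicity $k$, so $y$ changes sign at $0$; up to replacing the pair $(x,y)$ by $(-x,-y)$ (which changes neither the roots of $rx+y$ nor any hypothesis) I may assume $\lc(y)>0$, whence $y<0$ on $(-\infty,0)$ and $y>0$ on $(0,\infty)$. Likewise, replacing $x$ by $-x$ and $r$ by $-r$ if necessary, I may assume $\lc(x)>0$ and look for $r>0$. In this normalization the hypothesis $y(x_1)y(x_2)<0$ says precisely that $x$ has a root on each side of $0$, so that $rx+y$ takes the fixed values $y(x_1)<0$ and $y(x_2)>0$ at some $x_1<0<x_2$ independently of $r$; this pins down the sign change responsible for the root.

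Next I would split $\R$ into three regions, fixing the constants in the right order. Choose $\varepsilon>0$ so small that $x$ keeps the sign of $x(0)$ on $[-\varepsilon,\varepsilon]$ and that the derivative hypotheses of Lemma \ref{baselemma} hold on $(0,\varepsilon]$ (and, after the reflection $X\mapsto -X$ together with a global sign change, on $[-\varepsilon,0)$ as well); these hypotheses are automatic for small $\varepsilon$, since near $0$ one has $y^{(i)}(X)\sim \frac{y^{(k)}(0)}{(k-i)!}X^{k-i}$ with $y^{(k)}(0)>0$, and every derivative of $x$ has constant sign off its finitely many zeros. Then choose $M>\varepsilon$ larger than every real root of $x$, so that $x>0$ on $[M,\infty)$ and $x<0$ on $(-\infty,-M]$. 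On the two tails $rx$ and $y$ have the same sign for every $r>0$, so $rx+y$ has no root there. On the compact set $K=[-M,-\varepsilon]\cup[\varepsilon,M]$ one has $|y|\ge\mu>0$ and $|x|\le L$; hence for $0<r<\mu/L$ we get $|rx|<|y|$ on $K$, so $rx+y$ has no root on $K$ either. Consequently every root of $rx+y$ lies in $(-\varepsilon,\varepsilon)$.

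Finally I would analyse $(-\varepsilon,\varepsilon)$. Near $0$ one has $rx+y\approx rx(0)$, of sign $\sgn(x(0))$; on the side of $0$ where $y$ has sign $\sgn(x(0))$ the summands $rx$ and $y$ agree in sign, so $rx+y$ has no root there, while on the opposite side $rx+y$ changes sign between $0$ and the endpoint and hence has at least one root by the intermediate value theorem. Applying Lemma \ref{baselemma} on $(0,\varepsilon]$ and its reflected-and-negated version on $[-\varepsilon,0)$, and shrinking $r_0$ if necessary, each side contains \emph{at most} one root of $rx+y$. Combining the three regions, for every sufficiently small $r>0$ the polynomial $rx+y$ has exactly one real root, which proves the claim. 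I expect the only genuine obstacle to be the uniqueness of the root near $0$ when $y$ has a high-multiplicity root ($k>1$): this is exactly the situation Lemma \ref{baselemma} was designed to control, and without it one would have to rule out the spurious extra roots that a high-order contact between $rx$ and $y$ could in principle create.
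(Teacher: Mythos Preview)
Your argument is correct and follows essentially the same route as the paper: the same normalizations (translate the root of $y$ to $0$, fix the signs of the leading coefficients, look for small $r>0$), the same three-region decomposition of $\R$ into far tails where $rx$ and $y$ share sign, a compact middle set $K$ on which $|rx|<|y|$ for small $r$, and a small neighbourhood of $0$ handled via Lemma~\ref{baselemma}. The only cosmetic difference is that you invoke Lemma~\ref{baselemma} on both half-intervals around $0$, whereas the paper (and your own previous sentence) already dispose of the side where $rx$ and $y$ agree in sign by a direct sign check; this redundancy is harmless.
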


\begin{proof}
It is not restrictive to assume $\lim_{X\to \pm\infty} xy=+\infty$. If the leading coefficients $\lc(x)$ and $\lc(y)$ are discordant the proof can be accordingly adapted by replacing $r$ with $-r$.

Let $\lim_{X\to \pm\infty} x, y=\pm \infty$. The case $\lim_{X\to \pm\infty} x, y=\mp \infty$ is analogous.
Up to a suitable translation we can assume $y(0)=0$, $x_1<0$ and $x_2>0$. Let $k$ be the (odd) multiplicity of $0$ as a root of $y$. Let $I_0=(-\varepsilon, \varepsilon)$ be a sufficiently small neighborhood of $0$ such that $x < 0$ and $y$ is strictly increasing in $I_0$. The case $x>0$ in $I_0$ can be treated similarly in the interval $[-\varepsilon,0)$. By possibly choosing a smaller $\varepsilon$, we may assume that in the interval $(0, \varepsilon]$ $y^{(i)}>0$ for $1\leq i\leq k-1$, $y^{(k)}$ does not change sign and $x^{(j)}$ is either zero or does not change sign for $1\leq j\leq k$. By Lemma \ref{baselemma} there exists a real number $r_0>0$ such that, for any $r \in (0, r_0]$, $r x+y$ has a unique root on $[0,\varepsilon]$. Let us observe that, in $[-\varepsilon,\ 0\ ]$, $rx+y<0$ for every positive $r$.

Now take $M \in \R^+ $ such that $xy>0$ for all $X$ such that $|X|>M$. Clearly, for every $r>0$, $rx+y$ has no roots for $|X|>M$. We consider the closed intervals $I_1=[\ -M,\ -\varepsilon\ ]$ and $I_2=[ \ \varepsilon,\  M\ ]$. Let $M_x=\max_{I_1\cup I_2} |x| \,>0 $ and  $m_y=\min_{I_1\cup I_2} |y|$. Since $y$ has no roots other than $0$, clearly $m_y>0$. By choosing $0<r<m_y/M_x$ the polynomial $rx+y$ has no zeroes in $I_1\cup I_2$. 

We conclude by choosing any $0<r<\min\{r_0, m_y/M_x\}$.
\end{proof}

\begin{thm}\label{odd_1}
Let $p$ and $q$ be elements of $D$. If $\deg\,p, \deg\,q$ are odd and either $p$ or $q$ has a unique root, then the matrix $\begin{pmatrix}
p & q\\
0 & 0
\end{pmatrix}$ is a product of idempotent matrices.
\end{thm}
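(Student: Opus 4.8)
By Lemma \ref{scambio} I may assume throughout that it is $q$ that has a unique root, and I fix a common denominator $\g\in\G^+$, writing $p=x/\g$ and $q=y/\g$ with $x,y\in\R[X]$. With this fixed denominator the quantities ``$\deg p$'', ``$\deg q$'' mean $\deg x$, $\deg y$, and the real roots of $p,q$ are the real roots of $x,y$; note that these data (degree comparisons, parities, and real roots) are insensitive to enlarging $\g$ by a factor of $\G^+$. Since $\deg y$ is odd and $y$ has a single real root $z_0$, that root has odd multiplicity, so $y$ changes sign at $z_0$. The first step is to reduce to the \emph{weakly comaximal} case. Setting $d=\gcd(x,y)$ and $x=d x_1$, $y=d y_1$ with $x_1,y_1$ coprime, I use the factorization
\[
\begin{pmatrix} x/\g & y/\g\\ 0 & 0\end{pmatrix}
=\begin{pmatrix} d\sigma/\g & 0\\ 0 & 0\end{pmatrix}
\begin{pmatrix} x_1/\sigma & y_1/\sigma\\ 0 & 0\end{pmatrix},
\]
valid for a suitable $\sigma\in\G^+$, whose first factor is a product of idempotents by \eqref{r0}; so it suffices to factor the second. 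Because the only possible common real root of $x$ and $y$ is $z_0$, the polynomial $y_1$ either has no real root or has $z_0$ as its unique real root, of multiplicity $\ell$. If $\ell=0$ or $\ell$ is even, then $y_1/\sigma$ is single-signed (and $x_1(z_0)\neq0$), so Theorem \ref{teorema 3.3} applies at once. If $\ell$ is odd, a parity count (the multiplicity of $z_0$ in $d$ is then even, and the $\G$-part of $d$ has even degree) shows $\deg x_1,\deg y_1$ are again both odd, reducing us to coprime $x,y$.

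So assume now $x,y$ coprime, both of odd degree, with $q=y/\g$ having unique root $z_0$ and $x(z_0)\neq0$. If $\deg q\ge\deg p$, then $p$ does not vanish at the unique root of $q$, and Theorem \ref{teorema 3.3}(ii) applies. Suppose instead $\deg p>\deg q$. As $x,y$ have no common roots, $y(u)\neq0$ at every root $u$ of $x$; hence either $y$ keeps a constant sign on the roots of $x$---and Theorem \ref{teorema 3.3}(i) applies directly---or there are roots $x_1,x_2$ of $x$ with $y(x_1)y(x_2)<0$.

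In this last situation all hypotheses of Lemma \ref{1root_dispari} are met, yielding $r\in\R$ for which $rx+y$ has a unique root $w_0$. Since $\begin{pmatrix}p & q\\0&0\end{pmatrix}$ is similar to $\begin{pmatrix}p & q+rp\\0&0\end{pmatrix}$ (conjugation by a unipotent matrix preserves idempotent factorizability), I replace $q$ by $q'=q+rp=(rx+y)/\g$. Keeping the common denominator $\g$, the numerator $rx+y$ has degree $\deg x=\deg p$ and its only real root is $w_0$; moreover $p(w_0)\neq0$, for otherwise $x(w_0)=y(w_0)=0$, contradicting coprimality. Thus $\deg q'\ge\deg p$ and $p$ is nonzero at the unique root of $q'$, so Theorem \ref{teorema 3.3}(ii) finishes the argument.

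The crux is exactly this sign-change case $\deg p>\deg q$ with $q$ oscillating across the roots of $p$: here neither clause of Theorem \ref{teorema 3.3} applies, and Lemma \ref{1root_dispari} is precisely the tool that folds the oscillation into one admissible column operation, restoring a unique root. The only other delicate point, handled above, is the parity bookkeeping in the common-factor reduction, where one must check that dividing out $\gcd(x,y)$ preserves the odd-degree and unique-root structure.
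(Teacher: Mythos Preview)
Your proof is correct and follows essentially the same route as the paper: reduce via Lemma~\ref{scambio} to $q$ having the unique root, handle the weakly comaximal case by splitting on $\deg p$ versus $\deg q$ and invoking Lemma~\ref{1root_dispari} in the sign-change subcase, then appeal to Theorem~\ref{teorema 3.3}. The only organizational difference is in the non-comaximal reduction: the paper peels off just the common power $(X-z)^{\min\{k,h\}}$ and then branches on $h\ge k$ versus $k>h$ (with a further split on the parity of $h$), applying Theorem~\ref{positive} in the cases where the surviving $q$-entry is single-signed; you instead divide by the full $\gcd(x,y)$ and branch on the parity of the residual multiplicity $\ell$, which amounts to the same case analysis repackaged. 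One small point worth making explicit: the existence of your ``suitable $\sigma\in\G^+$'' relies on $\max\{\deg p,\deg q\}<0$ (automatic here since both degrees are odd), which guarantees enough room between $\max(\deg x_1,\deg y_1)$ and $\deg\g-\deg d$ to pick an even $\deg\sigma$; the paper uses the same observation when choosing its $\delta$.
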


\begin{proof}
By Lemma \ref{scambio}, we can safely assume that $q$ has a unique root. 

We first consider the case $p$ and $q$ weakly comaximal.

If $\deg\, q \geq \deg\,p$, since $q$ has a unique root and $p$ and $q$ have no common factors, the hypothesis of Th. \ref{teorema 3.3} (ii) are satisfied.

If $\deg\, p > \deg\,q$ we distinguish two cases. If $q$ does not change sign on the roots of $p$, we are done by Theorem \ref{teorema 3.3} (i). Otherwise,
by Lemma \ref{1root_dispari}, it is always possible to find a suitable $r\in \R$ such that $rp+q$ has a unique root. Therefore, by Theorem \ref{teorema 3.3} (ii), the matrix $\begin{pmatrix}
p & rp+q\\
0 & 0
\end{pmatrix}$, similar to $\begin{pmatrix}
p & q\\
0 & 0
\end{pmatrix}$, is a product of idempotent matrices.

\medskip

Now consider the case of $p$ and $q$ not weakly comaximal. If $p=x/\g$ and $q=y/\g$, with $\g\in \G^+$ and $x,y\in \R[X]$, $x$ and $y$ have a common root. Since $q$ has odd degree and a unique root $z\in \R$, we have $x=(X-z)^h\bar{x}$ and $y=(X-z)^k \bar{y}$ with $k,h$ positive integers, $k$ odd, $\bar{y}\in\G$, $\bar{x}\in\R[X]$ and $\gcd(X-z,\bar{x})=1$.
Let us choose any $\delta \in \G^+$ such that either $\deg \delta=\min\{k,h\}$ or $\deg \delta=\min\{k,h\}+1$, in accordance with the parity of $\min\{k,h\}$. Since $\max\{\deg p, \deg q\}<0$, 
$\begin{pmatrix}
p & q\\
0 & 0
\end{pmatrix}=\begin{pmatrix}
(X-z)^{\min\{k,h\}}/\delta & 0\\
0 & 0
\end{pmatrix}\begin{pmatrix}
(X-z)^{h-\min\{k,h\}}\bar{x}\delta/\g & (X-z)^{k-\min\{k,h\}}\bar{y}\delta/\g\\
0 & 0
\end{pmatrix}$
is a factorization in $M_2(D)$ and, by (\ref{r0}), $\begin{pmatrix} p & q\\
0 & 0
\end{pmatrix}$ is a product of idempotent matrices if such is $\mathbf{S}=\begin{pmatrix}
(X-z)^{h-\min\{k,h\}}\bar{x}\delta/\g & (X-z)^{k-\min\{k,h\}}\bar{y}\delta/\g\\
0 & 0
\end{pmatrix}$. Let us remark that the elements of the first row of $\mathbf{S}$, $(X-z)^{h-\min\{k,h\}}\bar{x}\delta/\g$ and $(X-z)^{k-\min\{k,h\}}\bar{y}\delta/\g$, are now weakly comaximal.

If $h\geq k$, then
$
\mathbf{S}=\begin{pmatrix}
(X-z)^{h-k}\bar{x}\delta/\g &\bar{y}\delta/\g\\
0 & 0
\end{pmatrix}
$
and, since $\bar{y}\delta\in \G$, we conclude by applying Theorem \ref{positive}.

If $k>h$, we get
$
\mathbf{S}=\begin{pmatrix}
\bar{x}\delta/\g &(X-z)^{k-h}\bar{y}\delta/\g\\
0 & 0
\end{pmatrix}.
$
 If $h$ is even, then $\deg\,\bar{x}$ is odd, since $\deg\,x=h+\deg\,\bar{x}$ is odd. Moreover also $k-h$ is odd. It follows that $\bar{x}\delta/\g$ and $(X-z)^{k-h}\bar{y}\delta/\g$ are two weakly comaximal element of $D$ with odd degree and, being $\bar{y}$ and $\delta$ elements of $\G$, $(X-z)^{k-h}\bar{y}\delta/\g$ has a unique root $z\in\R$. Therefore, from the first part of the proof, we conclude that $\mathbf{S}$ is a product of idempotent matrices. If $h$ is odd, being $k-h$ even, $(X-z)^{k-h}\bar{y}\delta$ is always $\geq 0$ or $\leq 0$ and we conclude by applying Theorem \ref{positive}.

All possible cases have been examined.
\end{proof}

\begin{lem}\label{1root_pari}
Let $x,y$ be polynomials in $\R[X]$ without common roots, such that $\deg\,x$ is even, $\deg\,y$ is odd, $\deg\,x>\deg\,y$, $y$ has a unique root $y_1$ and there exist $x_1,x_2\in \R$ roots of $x$ such that $y(x_1)y(x_2)<0$. Then, there exists a suitable $r\in\R$, such that $rx+y$ has exactly two distinct roots $z_1,z_2\in \R$. Moreover, if the sign of $x(y_1)$ and that of the leading coefficient of $x$ are the same (resp. opposite), then $x(z_1)x(z_2)>0$ (resp. $x(z_1)x(z_2)<0$).
\end{lem}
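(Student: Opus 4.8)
The plan is to reduce the statement to counting and locating the real roots of the one-parameter family $rx+y$. Up to a translation I may assume $y_1=0$, and up to the substitution $X\mapsto -X$ (which fixes $x$, $\lc(x)$ and $x(0)$, and only reverses the sign of $\lc(y)$) I may assume $\lc(y)>0$. Since $\deg\,y$ is odd and $y$ has a unique root, that root has odd multiplicity, so $y$ changes sign at $0$; thus $y<0$ on $(-\infty,0)$ and $y>0$ on $(0,+\infty)$. As $x$ and $y$ have no common root and $y(0)=0$, we get $x(0)\neq 0$. I also record the reduction that will settle the ``moreover'' clause: at every root $z$ of $rx+y$ one has $y(z)\neq 0$ (otherwise $x(z)=0$ as well) and $x(z)=-y(z)/r$, whence
\[
x(z_1)x(z_2)=\frac{y(z_1)\,y(z_2)}{r^2},
\]
so that $\sgn(x(z_1)x(z_2))=\sgn(y(z_1)y(z_2))=\sgn(z_1z_2)$. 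It therefore suffices to find $r$ for which $rx+y$ has exactly two roots and to decide whether they lie on the same side of $0$ or on opposite sides.

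Away from $0$ the equation $rx+y=0$ is equivalent to $\phi(X):=x(X)/y(X)=-1/r$, so I would study the rational function $\phi$ on $(-\infty,0)$ and on $(0,+\infty)$. Computing its four one-sided limits at $-\infty,0^-,0^+,+\infty$: because $\deg\,x-\deg\,y$ is odd the two limits at $\pm\infty$ have opposite signs, and because the multiplicity of $0$ is odd the two limits at $0^\pm$ have opposite signs; hence exactly two of the four limits are $+\infty$ and two are $-\infty$, and their locations are dictated by $\sgn\lc(x)$ and $\sgn x(0)$. Near each end $\phi$ is asymptotic to a single monomial, hence eventually strictly monotone there, while on compact subintervals avoiding $0$ it is bounded. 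Consequently, for a level $c$ of large absolute value, $\phi=c$ has exactly one solution near each end whose limit is $(\sgn c)\infty$, and none elsewhere.

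I would then pick $r$ with $\sgn r=\sgn\lc(x)$ and $|r|$ small, so that $c=-1/r$ has large absolute value and $rx+y$ has exactly two roots $z_1,z_2$, sitting near the two ends at which $\phi\to(\sgn c)\infty$. The end-analysis shows that these two ends lie on the same side of $0$ exactly when $\sgn x(0)=\sgn\lc(x)$, and on opposite sides otherwise; together with $\sgn(x(z_1)x(z_2))=\sgn(z_1z_2)$ this yields the claimed dichotomy. The hypothesis $y(x_1)y(x_2)<0$ enters here to guarantee that $x$ really has roots $x_1<0<x_2$ straddling $0$: evaluating $rx+y$ at $x_1,x_2,0$ gives the values $y(x_1)<0$, $y(x_2)>0$, $r\,x(0)$, which reproduce by the intermediate value theorem the sign changes locating $z_1,z_2$, in the spirit of Lemma \ref{1root_dispari}.

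The step I expect to be the main obstacle is the exact count, i.e.\ the upper bound of two roots uniformly as $r\to 0$. The delicacy is that one of the two roots escapes to infinity as $r\to 0$ (the term $rx$ overtakes $y$ only far out, where $x$ dominates), so no fixed compact exhaustion traps it and the region-by-region argument of Lemma \ref{1root_dispari} does not carry over verbatim. The level-set formulation is precisely what removes this difficulty, the escaping root being the unique large preimage of $\phi$ near an infinite end. The remaining care is to shrink the end-neighbourhoods so that $\phi$ has no critical point there (possible since $\phi'$ has finitely many zeros) and to verify that no spurious preimages arise in the bulk.
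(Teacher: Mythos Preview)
Your argument is correct and follows a genuinely different line from the paper's. The paper works region by region: it invokes Lemma~\ref{baselemma} (an iterated-derivative count) to control the root of $rx+y$ near $y_1$, uses a separate first-derivative argument to pin down the single root that escapes to $-\infty$, and bounds $|x|$ and $|y|$ on the remaining compact intervals to exclude further roots. You instead recast $rx+y=0$ as the level-set equation $\phi(X)=x/y=-1/r$ and analyse the four one-sided limits of $\phi$ at $0^{\pm}$ and $\pm\infty$, using only that a rational function has finitely many critical points (hence is eventually strictly monotone at each end) and is bounded on compacta. This bypasses Lemma~\ref{baselemma} entirely, handles the ``escaping'' root and the root near $y_1$ symmetrically, and in fact shows that the hypothesis $y(x_1)y(x_2)<0$ is not needed for the conclusion. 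Your treatment of the ``moreover'' clause, via $x(z_1)x(z_2)=y(z_1)y(z_2)/r^{2}$ and $\sgn y(t)=\sgn t$, is also more transparent than the paper's ``by construction'' remark.

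Two small comments. First, your final paragraph attributing a role to the hypothesis $y(x_1)y(x_2)<0$ is unnecessary: your level-set argument already gives both the exact count and the location of the two roots without it, so that paragraph only muddies an otherwise clean proof. Second, your choice $\sgn r=\sgn\lc(x)$ is harmless but inessential; as you can check from your own limit table, either sign of $r$ yields the same dichotomy on $\sgn(z_1z_2)$, so you may simply say ``for all $r$ of sufficiently small absolute value''.
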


\begin{proof}
We assume that $\lim_{X\to \pm\infty} x=+\infty$ and $\lim_{X\to \pm\infty} y=\pm\infty$. If the leading coefficients $\lc(x)$ and $\lc(y)$ have opposite signs or they are both negative, the proof can be easily adapted.

Up to a suitable translation we can assume $y(0)=0$, $x_1<0$ and $x_2>0$. Let $k$ be the (odd) multiplicity of $0$ as root of $y$. Let $I_0=(-\varepsilon, \varepsilon)$ be a sufficiently small neighborhood of $0$ such that $x<0$ in $I_0$ and, in $(0,\varepsilon]$, $y^{(i)}>0$ for $1 \leq i\leq k-1$, $y^{(k)}$ does not change sign and $x^{(j)}$ is either zero or does not change sign  for $1\leq j\leq k$. The case $x>0$ in $I_0$ can be similarly treated in the neighborhood $[-\varepsilon, 0)$. Under the above assumptions, by Lemma \ref{baselemma} there exists a real number $r_0>0$ such that, for any assigned $r \in (0, r_0]$, $r x+y$ has a unique root in $[\ 0,\ \varepsilon\ ]$. Let us observe that in $[-\varepsilon,\ 0\ ]$ $rx+y<0$ for every positive $r$.

Take $M \in \R^+ $ such that $x,y>0$ in the interval $[M, + \infty)$. Clearly, for every $r>0$, $rx+y$ has no roots in $(0, M)$. 

Now we choose $N \in \R^+ $ such that $x>0$, $y<0$ for $X\leq -N$ and $x'>0$ and $y'<0$ in the interval $(-\infty,-N)$. Under these assumptions, there exists a real number $r_1>0$ such that, for any assigned $r \in (0, r_1]$, $r x'+y'<0$ and $(rx+y)(-N)<0$. Therefore, $rx+y$ has a unique root in $(-\infty,-N ]$ for every $r\in (0,r_1]$. 

Let us consider the closed intervals $I_1=[\ -N,\ -\varepsilon\ ]$ and $I_2=[ \ \varepsilon,\  M\ ]$. Let $M_x=\max_{I_1\cup I_2} |x| \,>0 $ and  $m_y=\min_{I_1\cup I_2} |y| \,>0$ ($y$ has no roots other than $0$). By choosing $0<r<m_y/M_x$ the polynomial $rx+y$ has no zeroes in $I_1\cup I_2$. 

We can conclude by choosing any $0<r<\min\{r_0, r_1, m_y/M_x\}$.

The last statement of the theorem follows immediately by construction.
\end{proof}

\begin{thm}\label{even_1}
Let $p$ and $q$ be two weakly comaximal elements of $D$. If $\deg\,p$ is even, $\deg\,q$ is odd and either $p$ has a unique root or $q$ has a unique root $u$ such that $p(u)$ has the same sign of the leading coefficient of $p$, then the matrix $\begin{pmatrix}
p & q\\
0 & 0
\end{pmatrix}$ is a product of idempotent matrices.
\end{thm}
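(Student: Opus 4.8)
The plan is to write $p=x/\g$ and $q=y/\g$ with $\g\in\G^+$, so that, identifying the degrees and real roots of $p,q$ with those of $x,y$, we have $\deg x$ even, $\deg y$ odd, and in particular $\deg p\neq\deg q$. I would then treat the two alternative hypotheses separately, each time reducing to Theorem \ref{positive} or to Theorem \ref{teorema 3.3}. Suppose first that $p$ has a unique root $c$. Since $\deg x$ is even and $x$ has a single real root, factoring over $\R$ gives $x=a(X-c)^{j}h$ with $h\in\G^+$ and $j$ even (because $\deg h$ is even), so $x$, and hence $p$, has constant sign. Thus $p\ge 0$ or $p\le 0$ and, as $p$ and $q$ are weakly comaximal, the factorization follows at once from Theorem \ref{positive} (if $p\ge 0$) or from the Remark following it (if $p\le 0$).

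Suppose instead that $q$ has a unique root $u$ with $p(u)$ of the same sign as $\lc(p)$. If $\deg q>\deg p$, then the only root of $q$ is $u$ and $p(u)\neq 0$ by weak comaximality, so $p$ is trivially of constant sign on the (single-element) set of roots of $q$ and Theorem \ref{teorema 3.3}(ii) applies. So assume $\deg p>\deg q$. If $q$ does not change sign on the roots of $p$, we conclude by Theorem \ref{teorema 3.3}(i); otherwise there are roots $x_1,x_2$ of $x$ with $y(x_1)y(x_2)<0$, and $x,y$ satisfy exactly the hypotheses of Lemma \ref{1root_pari}. That lemma provides a nonzero $r\in\R$ for which $rx+y$ has exactly two distinct real roots $z_1,z_2$; moreover, since $p(u)=x(u)$ has the same sign as $\lc(p)=\lc(x)$, its final assertion gives $x(z_1)x(z_2)>0$.

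To finish I would pass to the similar matrix $\begin{pmatrix}p & rp+q\\ 0 & 0\end{pmatrix}$, whose idempotent factorizability is equivalent to that of the original one because this property is preserved by similarity (Section 2). As $\deg x>\deg y$ and $r\neq 0$, we have $rp+q=(rx+y)/\g\in D$ with $\deg(rx+y)=\deg x=\deg p$, so $\deg(rp+q)\ge\deg p$; and, $\g$ having no real roots, the roots of $rp+q$ are exactly $z_1,z_2$, at which $p(z_1)p(z_2)>0$, so $p$ is of constant sign on them. Theorem \ref{teorema 3.3}(ii) then yields the factorization, and all cases are covered.

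The crux, and the only genuinely delicate point, is this last sub-case. It is precisely here that the hypothesis on the sign of $p(u)$ is used essentially: it is what forces, through Lemma \ref{1root_pari}, the two new roots $z_1,z_2$ to lie where $p$ keeps a single sign, so that $p$ does not change sign on the roots of $rp+q$ and Theorem \ref{teorema 3.3}(ii) becomes applicable. Without it one would instead obtain $x(z_1)x(z_2)<0$, $p$ would change sign on those roots, and none of the available criteria could be invoked.
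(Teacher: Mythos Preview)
Your proof is correct and follows essentially the same route as the paper's: reduce the case where $p$ has a unique root to Theorem \ref{positive} via the observation that $p$ has constant sign, and handle the second case by splitting on $\deg q\gtrless\deg p$ and on whether $q$ changes sign on the roots of $p$, invoking Lemma \ref{1root_pari} and Theorem \ref{teorema 3.3} exactly as the paper does. One small slip: you write ``$\deg(rx+y)=\deg x=\deg p$'', but $\deg p=\deg x-\deg\g$; the conclusion $\deg(rp+q)=\deg p$ you actually need is still correct.
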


\begin{proof}
We start assuming that $p$ has a unique root.
Since $p$ has even degree it is not restrictive to assume that $p\geq 0$. Then we conclude by Theorem \ref{positive}.

Assume now that $q$ has a unique root $u$ and that $p(u)$ has the same sign of the leading coefficient of $p$.
We distinguish two cases.

If $\deg\, q > \deg\,p$, since $q$ has a unique root and $p$ and $q$ do not have common factors, we conclude by applying Th. \ref{teorema 3.3} (ii).

If $\deg\, p > \deg\,q$ we have two possibilities.
If $q$ does not change sign on the roots of $p$, we are done by Theorem \ref{teorema 3.3} (i). Otherwise,
by Lemma \ref{1root_pari}, it is always possible to find a suitable $r\in \R$ such that $rp+q$ has exactly two roots $z_1,z_2$ such that $x(z_1)x(z_2)>0$. Therefore, by Theorem \ref{teorema 3.3} (ii), the matrix $\begin{pmatrix}
p & rp+q\\
0 & 0
\end{pmatrix}$, similar to $\begin{pmatrix}
p & q\\
0 & 0
\end{pmatrix}$, is a product of idempotent matrices.
\end{proof}

\begin{rem}
Let $p=x/\g$ and $q=y/\g$ be elements of $D$ such that $\max\{\deg p, \deg q\}=0$. If $p$ and $q$ have a common factor $M\notin \G$, whenever the degree of $M$ is odd and $\deg\, \delta\geq 1+ \deg\, M$, the decomposition 
\[\begin{pmatrix}
p & q\\
0 & 0
\end{pmatrix}=\begin{pmatrix}
M/\delta & 0\\
0 & 0
\end{pmatrix}\begin{pmatrix}
x\delta/M\g & y\delta/M\g\\
0 & 0
\end{pmatrix}\]
is not a factorization in $D$ since $\max\{\deg (x\delta/M\g),\deg( y\delta/M\g)\}\geq 1$. For this reason, we cannot generalize Theorem \ref{even_1} to the non-comaximal case as we have done in Theorem \ref{odd_1}. 
\end{rem}

However, under the additional hypothesis that $\max\{\deg p, \deg q\}<0$, the following corollary holds.

\begin{cor}\label{corollary_even1}
Let $p=(X-z)^k\bar{x}/\g$ and $q=(X-z)^h\bar{y}/\g$, with $k,h\in\mathbb{N}^+$, $\bar{x},\bar{y}\in\R[X]$, $\g\in \G^+$, $\bar{x}(z)\ne 0$, $\bar{y}(z)\ne 0$ be two elements of $D$ such that $\max\{\deg p, \deg q\}<0$. If $\deg\,p$ is even, $\deg\,q$ is odd and either $p$ has $z$ as unique root and $\sgn (\bar{y}(z))=\sgn (\lc(\bar{y}))$ or $q$ has $z$ as unique root and $\sgn (\bar{x}(z))=\sgn (\lc(\bar{x}))$, then the matrix $\begin{pmatrix}
p & q\\
0 & 0
\end{pmatrix}$ is a product of idempotent matrices.
\end{cor}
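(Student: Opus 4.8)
The plan is to reduce the non-comaximal situation to the weakly comaximal hypothesis of Theorem \ref{even_1} by factoring out a common power of $(X-z)$, exactly in the spirit of the non-comaximal part of the proof of Theorem \ref{odd_1}. Write $p=(X-z)^k\bar x/\g$ and $q=(X-z)^h\bar y/\g$, and let $m=\min\{k,h\}$. Choosing $\delta\in\G^+$ with $\deg\delta$ equal to $m$ or $m+1$ according to the parity of $m$ (so that $(X-z)^m/\delta\in D$), the hypothesis $\max\{\deg p,\deg q\}<0$ guarantees that
\[
\begin{pmatrix} p & q\\ 0 & 0\end{pmatrix}=\begin{pmatrix}(X-z)^m/\delta & 0\\ 0 & 0\end{pmatrix}\begin{pmatrix}(X-z)^{k-m}\bar x\,\delta/\g & (X-z)^{h-m}\bar y\,\delta/\g\\ 0 & 0\end{pmatrix}
\]
is a genuine factorization in $M_2(D)$: the strict inequality on degrees leaves enough room so that the entries of the second factor still lie in $D$. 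By \eqref{r0} it suffices to treat the second factor $\mathbf S$, whose two first-row entries are now weakly comaximal since the common $(X-z)$-power has been removed.

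Next I would split into the two cases $h\ge k$ and $k>h$, matching the parity bookkeeping of Theorem \ref{odd_1}. When $h\ge k$ we have $m=k$, so $\mathbf S=\begin{pmatrix}\bar x\,\delta/\g & (X-z)^{h-k}\bar y\,\delta/\g\\ 0 & 0\end{pmatrix}$; when $k>h$ we have $m=h$ and $\mathbf S=\begin{pmatrix}(X-z)^{k-h}\bar x\,\delta/\g & \bar y\,\delta/\g\\ 0 & 0\end{pmatrix}$. In each case one entry has lost its $(X-z)$-factor entirely, and the task is to verify that the first-row entries of $\mathbf S$ meet the hypotheses of Theorem \ref{even_1}: namely that one of them has even degree, the other odd degree, and that the unique-root/sign condition on $\sgn(\bar x(z))$ or $\sgn(\bar y(z))$ survives. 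Here the sign hypotheses $\sgn(\bar y(z))=\sgn(\lc(\bar y))$ (resp.\ $\sgn(\bar x(z))=\sgn(\lc(\bar x))$) are precisely what is needed to ensure that, for the entry still carrying a power of $(X-z)$, the value of the \emph{other} entry at the common root $z$ agrees in sign with its leading coefficient, since multiplying by $\delta\in\G^+$ and by a positive even power $(X-z)^{\,\cdot\,}$ preserves both the sign at $z$ and the leading-coefficient sign.

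I expect the main obstacle to be the parity and sign bookkeeping in these subcases, rather than any single hard estimate. One must track how the parities of $k$, $h$, $\deg\bar x$, $\deg\bar y$ interact with the parity of $\deg\delta$ so that exactly one first-row entry of $\mathbf S$ ends up with even degree and the other with odd degree (using that $\deg p$ is even and $\deg q$ is odd by hypothesis), and then confirm that the root and sign conditions transfer correctly. When the residual power of $(X-z)$ turns out to be even, the relevant entry no longer changes sign and one can invoke Theorem \ref{positive} instead of Theorem \ref{even_1}, exactly as in the odd case; when it is odd, Theorem \ref{even_1} applies. The care required is in checking that the sign condition $\sgn(\bar x(z))=\sgn(\lc(\bar x))$ (or its $\bar y$ analogue) is exactly the condition guaranteeing the hypothesis ``$p(u)$ has the same sign as the leading coefficient of $p$'' for the reduced matrix $\mathbf S$, so that no case escapes the two cited theorems.
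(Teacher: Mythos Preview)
Your proposal is correct and follows essentially the same approach as the paper. The paper's own proof is in fact just a one-line sketch (``analogous to the second part of the proof of Theorem \ref{odd_1} \ldots\ properly applying Theorems \ref{positive} and \ref{even_1} and using (\ref{r0})''), and your outline spells out exactly this strategy: pull out $(X-z)^{\min\{k,h\}}/\delta$ using the degree slack from $\max\{\deg p,\deg q\}<0$, then case-split on $h\ge k$ versus $k>h$ and on parities to land in either Theorem \ref{positive} or Theorem \ref{even_1}. Your identification of the sign hypotheses $\sgn(\bar x(z))=\sgn(\lc(\bar x))$ (resp.\ $\bar y$) as precisely what is needed for the second alternative of Theorem \ref{even_1} is on target; just be aware that because $\deg\delta$ may be $m+1$ rather than $m$, the parities of the two entries of $\mathbf S$ can swap relative to those of $p,q$, so in some subcases the ``even-degree'' role in Theorem \ref{even_1} is played by the entry coming from $q$ rather than $p$.
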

\begin{proof} We skip the details of the proof since it is analogous to the second part of the proof of Theorem \ref{odd_1}. We reach our conclusion by properly applying Theorems \ref{positive} and \ref{even_1} and using (\ref{r0}).
\end{proof}

\begin{rem}
It is worth noting that the couples $(p,q)\in D^2$ such that $\begin{pmatrix}
p & q\\
0 & 0
\end{pmatrix}$ is a product of idempotent matrices can generate both principal and non-principal ideals of $D$. Thus, this characterization of the elements $p$ and $q$ is not related to the idempotent factorization of $\begin{pmatrix}
p & q\\
0 & 0
\end{pmatrix}$. The same fact can be observed in \cite{CZ_integers} for the factorization into idempotent factors of matrices of the form $\begin{pmatrix}
p & q\\
0 & 0
\end{pmatrix}$ over real quadratic integer rings.
\end{rem}

Theorems \ref{positive}, \ref{odd_1}, \ref{even_1} and Corollary \ref{corollary_even1} contribute to narrow down the class of singular dimension $2$ matrices over $D$ that might not admit an idempotent factorization. We provide an explicit example here below.
\begin{ex}\label{Example}
The simplest example of $2\times 2$ singular matrix over $D$ to which the above results do not apply and for which we cannot prove or disprove the existence of an idempotent factorization is the matrix $\mathbf{M}=\begin{pmatrix}
(X^2-1)/(1+X^2) & X/(1+X^2)\\
0 & 0
\end{pmatrix}$.

Nevertheless, it can bee seen without too much effort that $\mathbf{M}$ does not admit ``easy'' idempotent decompositions. 

First of all, $\mathbf{M}$ cannot factor in $M_2(D)$ as $\mathbf{M}=\begin{pmatrix}
p' & 0\\
0 & 0
\end{pmatrix}\mathbf{T}$, with $\mathbf{T}$ idempotent. 

Let $p'=x'/\eta,\,a=a'/\delta,\,b=b'/\delta,\,c=c'/\delta\in D$, $a(1-a)=bc$ and assume by contradiction that \begin{equation}\label{matrixequation}
\mathbf{M}=\begin{pmatrix}
p' & 0\\
0 & 0
\end{pmatrix}\begin{pmatrix}
a & b\\
c & 1-a
\end{pmatrix}.
\end{equation}
The matrix equation (\ref{matrixequation}) leads to the equalities
\[
(X^2-1)/X=a'/b'=c'/(\delta-a').
\]
It follows that there exist $t,s\in \R[X]$ such that $a'=(X^2-1)t$, $b'=Xt$, $c'=(X^2-1)s$ and $\delta-a'=Xs$. Therefore, 
\begin{equation}\label{1}
(X^2-1)t+Xs=\delta.
\end{equation} The assumption that $a,b,c\in D$ also implies that $\deg \,t + 2, \deg\,s +2 \leq \deg\, \delta$ hence, by (\ref{1}), $\deg\, \delta=\deg \,t + 2 < \deg\,s +1$ and $\deg\,t$ is even. Moreover, being $\delta$ a monic polynomial in $\Gamma^+$, $t$ is monic as well and, since $\delta(0)=-t(0)>0$, there exist $t_1,t_2\in\R$ roots of $t$ such that $t$ is negative in $(t_1,t_2)$. Now, from the product in (\ref{matrixequation}), we have that $(X^2-1)/(1+X^2)=p'a$, i.e., $x't=\eta\delta/(1+X^2)\in\Gamma^+$. But this is impossible since $t\notin \Gamma$.

Analogous arguments show that $\mathbf{M}$ cannot factor in $M_2(D)$ as $\mathbf{M}=\begin{pmatrix}
0 & q'\\
0 & 0
\end{pmatrix}\mathbf{T}$, with $\mathbf{T}$ idempotent.

Moreover, it is also easy to show that $\mathbf{M}$ cannot be written as a product of two idempotent matrices. If we assume by contradiction that this happens, by Lemma \ref{construction},
\begin{equation}\label{matrixequation2}
\mathbf{M}=\begin{pmatrix}
1 & q'\\
0 & 0
\end{pmatrix}\begin{pmatrix}
a & b\\
c & 1-a
\end{pmatrix}
\end{equation}
with $q',a,b,c\in D$ and $a(1-a)=bc$. Set $q'=y'/\eta,\,a=a'/\delta,\,b=b'/\delta,\,c=c'/\delta$. Arguing as in the previous case, the matrix equation (\ref{matrixequation2}) implies that there exist $t,s\in\R[X]$ such that \[(X^2-1)t+Xs=\delta\] and 
\[\eta t+ y's=\eta\delta/(1+X^2).\]
Evaluating the first equality in $+1$ and $-1$, we get $s(-1)s(1)<0$. It follows that there exists a root of $s$ $s_1\in(-1,1)$ and that $t(s_1)=\delta(s_1)/(s_1^2-1)<0$. Evaluating the second equality in $s_1$ we obtain the contradiction $t(s_1)=\delta(s_1)/(s_1^2+1)>0$.
\end{ex}

\begin{rem}\label{comments}
As recalled in the introduction, Salce and Zanardo conjectured in \cite{SalZan} that every integral domain $R$ satisfying the property \ID2 should be a B\'ezout domain. The conjecture, suggested by previous results by Laffey \cite{Laff1}, Ruitenburg \cite{Ruit} and Bhaskara Rao \cite{BhasRao}, is motivated by many positive cases. Unique factorization domains, projective-free domains,  local domains and PRINC domains (introduced in \cite{SalZan}) turn to be B\'ezout whenever they satisfy property \ID2. In \cite{CZ_Prufer} it is proved that if every singular $2\times 2$ matrix over $R$ is a product of idempotent matrices, then $R$ is a Pr\"ufer domain such that every invertible $2\times 2$ matrix over $R$ is a product of elementary matrices. Also, interesting examples of Pr\"ufer non B\'ezout domains not satisfying \ID2 were provided. 
On the other hand, the recent paper \cite{CZ_integers} raised some doubts on the general validity of the conjecture. In fact, the authors showed that the large family of dimension $2$ column-row matrices over a real quadratic integer ring $\mathcal{O}$ factorize as products of idempotent matrices, even when $\mathcal{O}$ is not a B\'ezout domain. The failure of property \ID2 for the minimal Dress ring $D$ of $\R(X)$ should be proved using matrices similar to that in the above example. 

\end{rem}

\bibliographystyle{plain}

\end{document}